\newcommand{\mat}{\text{Mat}}
\renewcommand{\c}{\mathbb{C}}
\newcommand{\q}{\mathbb{Q}}
\newcommand{\z}{\mathbb{Z}}
\def\blfootnote{\gdef\@thefnmark{}\@footnotetext}
\title{An elementary proof of the Benjamini-Nekrashevych-Pete conjecture for the semi-direct products $\z^n\rtimes \z$}
\author{Dean Wardell}
\date{}
\begin{document}

\blfootnote{\textit{2020 Mathematics Subject Classification.} Primary 20F22;  Secondary 20F18, 20F38. \\ \textit{Keywords.} Scale-invariant groups, semi-direct products, renormalization, nilpotent matrices, nilpotent groups.}

\maketitle

\theoremstyle{definition}
\newtheorem{definition}{Definition}[section]
\newtheorem*{acknowledgement}{Acknowledgement}

\theoremstyle{plain}
\newtheorem{theorem}[definition]{Theorem}
\newtheorem{claim}[definition]{Claim}
\newtheorem*{claim*}{Claim}
\newtheorem{corollary}[definition]{Corollary}
\newtheorem{question}[definition]{Question}
\newtheorem{proposition}[definition]{Proposition}
\newtheorem{lemma}[definition]{Lemma}
\newtheorem{conjecture}{Conjecture}

\theoremstyle{remark}
\newtheorem{example}[definition]{Example}
\newtheorem{remark}[definition]{Remark}
\newtheorem*{notation}{Notation}
\newtheorem*{term}{Terminology}

\begin{abstract}
    A finitely generated group $G$ is called strongly scale-invariant if there exists an injective homomorphism $f:G\to G$ such that $f(G)$ is a finite index subgroup of $G$ and such that $\cap_{n\geq 0} f^n(G)$ is finite. Nekrashevych and Pete conjectured that all strongly scale-invariant groups are virtually nilpotent, after disproving a stronger conjecture by Benjamini.
    
    This conjecture is known to be true in some situations. Deré proved it for virtually polycyclic groups. In this paper, we provide an elementary proof for those polycyclic groups that can be written as a semi-direct product $\z^n\rtimes \z$.
\end{abstract}

\section{Introduction}

Let $G$ be a finitely generated group. We call $G$ \textbf{scale-invariant} if there exists a descending chain $\{G_k\}_{k\geq0}$ of finite index subgroups in $G$, such that each $G_k$ is isomorphic to $G$ and such that $\cap_{k\geq0} G_k$ is finite. Benjamini introduced this concept, motivated by problems of renormalization in percolation theory on Cayley graphs of groups \cite[Section 9.2]{Sapir}.

Benjamini conjectured that every finitely generated scale-invariant group has polynomial growth. Therefore, by Gromov's theorem \cite{Gromov} every such group is virtually nilpotent. This conjecture was shown to be false by Nekrashevych and Pete by exhibiting a family of counterexamples in \cite{NekrashevychPete}. As a result, Nekrashevych and Pete thought of a (natural) stronger property, which they call strong scale-invariance.

\begin{definition}
    Let $G$ be a group and $f:G\to G$ a monomorphism. Let $f^k:G\to G$ denote the $k$-th iterate of $f$. We call $f$ \textbf{strongly scale-invariant} if $f(G)$ is a proper finite index subgroup of $G$ and if $\cap_{k\geq0} f^k(G)$ is finite. We call $G$ \textbf{strongly scale-invariant} if it admits a strongly scale-invariant monomorphism.
\end{definition}

Using this stronger definition, it is very natural to ask a question similar to that of Benjamini, resulting in the following conjecture.

\begin{conjecture}[Benjamini-Nekrashevych-Pete]

Let $G$ be a finitely generated strongly scale-invariant group. Then $G$ is virtually nilpotent.

\label{Conj: main}
    
\end{conjecture}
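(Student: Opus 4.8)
Since Conjecture~\ref{Conj: main} is open in general, what follows is a plan of attack, together with an indication of the case the present paper settles, namely $G=\z^n\rtimes\z$. The global strategy is by contradiction: assume $G$ is finitely generated, carries a strongly scale-invariant monomorphism $f$ with $[G:f(G)]=m\geq 2$, and is \emph{not} virtually nilpotent, and seek a contradiction. By Gromov's theorem \cite{Gromov} it is equivalent, and hence suffices, to prove that a strongly scale-invariant group has polynomial growth. It should be stressed at the outset that the finite-index part of the hypothesis carries no growth information — a finite-index subgroup is quasi-isometric to the ambient group — so the entire force of the conjecture must be extracted from the condition that $\bigcap_{k\geq 0}f^k(G)$ is finite.

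The mechanism is transparent in the polycyclic setting, and in particular for $G=\z^n\rtimes_A\z$ with $A\in\mathrm{GL}_n(\z)$. If $A$ is quasi-unipotent (equivalently all eigenvalues of $A$ are roots of unity) then $G$ is already virtually nilpotent and there is nothing to prove, so assume $A$ is not quasi-unipotent. Then $\z^n$ is the Fitting subgroup of $G$ — and of every finite-index subgroup of $G$ — so $f(\z^n)=\mathrm{Fit}(f(G))\subseteq\z^n$; thus $f$ restricts to an injective map $M\in\mat_n(\z)$ on $\z^n$ and induces multiplication by some $q\in\z\setminus\{0\}$ on $G/\z^n=\z$. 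Comparing $f(ava^{-1})$ with $f(a)f(v)f(a)^{-1}$ (with $a$ a generator of the acting $\z$) yields the identity $MA=A^qM$, so $A$ and $A^q$ are conjugate over $\q$ and have the same spectrum; iterating, the finite multiset of eigenvalues of $A$ is invariant under $\lambda\mapsto\lambda^{q^k}$ for all $k$. If $|q|\geq 2$ this forces every eigenvalue to lie on the unit circle — otherwise $\{\,|\lambda|^{q^k}\,\}_k$ would be infinite — whence by Kronecker's theorem every eigenvalue is a root of unity, contradicting that $A$ is not quasi-unipotent. The finite-intersection hypothesis is genuinely needed only in the residual case $|q|=1$: passing to $f^2$ one may take $q=1$, finds that $M$ now commutes with $A$, and checks that the iterates $f^k(a)=(\sigma_k,1)$ have $\z^n$-components $\sigma_k$ that stabilise modulo $M^k\z^n$; this produces infinitely many elements in $\bigcap_k f^k(G)$, a contradiction. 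This settles $G=\z^n\rtimes\z$ completely, the numerically delicate point being exactly the case $|q|=1$.

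For the conjecture in general one would first invoke Deré's theorem to reduce to the case that $G$ is not virtually polycyclic. The natural object to work with is the inverse system $\{G/f^k(G)\}_k$ with its $m$-to-$1$ transition maps: this yields a self-similar action of $G$ on the rooted $m$-ary tree whose associated virtual endomorphism is $f^{-1}\colon f(G)\to G$, and the finiteness of $\bigcap_k f^k(G)$ forces this action to have finite kernel and to be ``contracting-like'' (cf.\ the self-similar/scale-invariant dictionary of \cite{NekrashevychPete}). One then wants to transplant the polycyclic mechanism to the non-commutative world: locate a maximal nilpotent-by-finite normal subgroup, pass to its Malcev completion and the associated real Lie algebra, and argue that $f$ induces there an ``expanding'' (virtual) automorphism — a structure incompatible with $G$ being, say, elementary amenable of exponential growth, or with any non-nilpotent linear constituent. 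An alternative route is a structure theorem — every finitely generated strongly scale-invariant group is linear, or is elementary amenable — which would reduce everything to cases already known.

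The main obstacle is that none of these reductions is available unconditionally: outside the polycyclic world there need not be a canonical normal subgroup on which $f$ acts ``linearly'', the finite quotients $G/f^k(G)$ need not be solvable, and — as already noted — the only hypothesis with real teeth is the finiteness of $\bigcap_k f^k(G)$, to which no global invariant (the growth function, amenability, a faithful finite-dimensional representation) is presently known to be sensitive. Isolating such an invariant, equivalently exhibiting on every strongly scale-invariant group an ``expanding'' virtual automorphism of an associated nilpotent Lie algebra, is the crux; the contribution of the present paper is to carry this out explicitly and by elementary means in the first non-trivial class, the semi-direct products $\z^n\rtimes\z$.
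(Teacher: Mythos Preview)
Neither you nor the paper proves the conjecture in full; the paper establishes only the case $G=\z^n\rtimes_A\z$ (Theorem~\ref{The: main}), and your proposal correctly separates that from the speculative programme. So the relevant comparison is between your sketch for $\z^n\rtimes\z$ and Proposition~\ref{Prop: properties of f}.

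Your route to $f(\z^n)\subseteq\z^n$ via the Fitting subgroup differs from the paper's. The paper shows directly that if $h(w,0)\neq0$ for some $w$ then $g(V,0)$ is trapped in $\ker(I-A^{h(w,0)})$, which by Lemma~\ref{Le: dim ker A^s-I at most n-2} has dimension at most $n-2$, forcing $\ker f\neq0$. Your Fitting argument is valid but tacitly uses that $\mathrm{Fit}(H)\subseteq\z^n$ for every finite-index $H\le G$, which requires essentially the same commutator computation. On $|q|\ge2$ the two arguments agree; the paper is simply terse where you invoke Kronecker explicitly.

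The genuine gap is your $|q|=1$ endgame. With $q=1$ one has $f^k(0,1)=(\sigma_k,1)$ with $\sigma_k=\sum_{j<k}M^jb$, and indeed $\sigma_{k+1}\equiv\sigma_k\pmod{M^k\z^n}$; but this is only an $M$-adic Cauchy condition and does not produce a $w\in\z^n$ with $w\equiv\sigma_k\pmod{M^k\z^n}$ for all $k$. For instance with $A=\left(\begin{smallmatrix}2&1\\1&1\end{smallmatrix}\right)$, $M=3I$, $b=e_1$, the required $w$ would satisfy $2w_1=-1$. So ``this produces infinitely many elements in $\bigcap_k f^k(G)$'' is unjustified. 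The paper closes exactly this hole by constructing an explicit fixed point of $f^2$: if $I-F$ is singular one gets $(v,0)$ with $Fv=v$; otherwise Lemma~\ref{Le: sums of powers will eventually be divisible} supplies a $z$ for which $(I-F)^{-1}(I+A+\cdots+A^{z-1})$ is integral, and then $(v,z)$ with $v=(I-F)^{-1}(I+A+\cdots+A^{z-1})g(0,1)$ satisfies $f(v,z)=(v,z)$. Your ``limit'' $\sigma_\infty=(I-M)^{-1}b$ is the $z=1$ instance of this formula; the freedom in $z$ together with Lemma~\ref{Le: sums of powers will eventually be divisible} is precisely what is needed to land in $\z^n$.
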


By today, the conjecture is proved under extra assumptions: on the group, on the associated group chain and on the discriminant group of the associated group action. Van Limbeek showed in \cite{Limbeek} that the conjecture is true if we assume all $f^n(G)$ to be normal subgroups of $G$. Hurder, Lukina and van Limbeek proved the conjecture in the case when the discriminant group $D_f$, which is an invariant associated to the dynamical system induced by $f$, is finite \cite{hurder2021cantor}. Finally, Deré showed in \cite{Dere} that the conjecture is true for all virtually polycyclic groups.

The proof of Deré requires understanding of $\q$-algebraic hulls of polycyclic groups, and of the Malcev completion of nilpotent groups. Instead, in our paper we aim to give a very elementary proof using only linear algebra and some basic facts from abstract algebra for a smaller class of virtually polycyclic groups, namely that of semi-direct products of the form $G=\z^n\rtimes_A \z$, where we have $A\in \text{GL}_n(\z)$. 

The main idea is that $G$ is virtually nilpotent if and only if all eigenvalues of $A$ are roots of unity. Then the assumption that our group is not virtually nilpotent forces an injective group homomorphism $f:G\to G$ to be surjective in the second coordinate. This in turn gives us a non-zero fixed point of $f$. Since $G$ has no non-trivial finite order elements, the intersection $\cap_{n\geq0} f^n(G)$ is infinite. This prevents $f$ from being strongly scale-invariant, proving the following theorem.

\begin{theorem}
    Let $n$ be a positive integer, $A\in \text{GL}_n(\z)$, and $G=\z^n\rtimes_A \z$. Suppose that $G$ admits a strongly scale-invariant homomorphism $f:G\to G$. Then $G$ is virtually nilpotent. \label{The: main}
\end{theorem}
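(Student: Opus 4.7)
Writing elements of $G$ as $(v, k)$ with $(v, k)(w, l) = (v + A^k w, k + l)$, I would begin by proving the underlying dichotomy: $G$ is virtually nilpotent iff every eigenvalue of $A$ is a root of unity. If so, some power $A^d$ is unipotent, making the finite-index subgroup $\z^n \rtimes_{A^d} d\z$ nilpotent; conversely, every finite-index normal subgroup of $G$ contains one of this form, and such a subgroup is nilpotent only if $A^d$ is unipotent, so virtual nilpotence forces all eigenvalues of $A$ to be roots of unity. Supposing then, for contradiction, that $G$ is not VN and that $f: G \to G$ is strongly scale-invariant, Kronecker's theorem (combined with $\det A = \pm 1$) supplies an eigenvalue $\lambda$ of $A$ with $|\lambda| > 1$. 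I would parameterize $f$ by $f(v, 0) = (\phi(v), \pi(v))$ and $f(0, 1) = (w, m)$; the relation $f((0,1)(v,0)(0,-1)) = f(Av,0)$ yields $\pi \circ A = \pi$ and $\phi(Av) = A^m \phi(v) + (I - A^{\pi(v)})w$.

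Next I would show $m = \pm 1$, which is the ``surjectivity in the second coordinate'' alluded to in the introduction. For simplicity I would describe the case $1 \notin \text{spec}(A)$ (the general case requires extracting the $1$-eigenspace of $A$ and handling a possibly nontrivial $\pi$ more carefully). Here $\pi$, being an $A$-invariant linear functional, must vanish, and the relation simplifies to $\phi A = A^m \phi$. Injectivity of $f$ forces $\phi$ to be injective on $\z^n$, hence invertible over $\q$, so $A$ and $A^m$ are conjugate in $\text{GL}_n(\q)$ and share a characteristic polynomial: the multiset of eigenvalues of $A$ is closed under $\lambda \mapsto \lambda^m$. If $|m| \geq 2$, iterating on our $\lambda$ with $|\lambda| > 1$ produces infinitely many distinct eigenvalues of unbounded modulus, contradicting $A$ being $n \times n$; injectivity rules out $m = 0$; hence $m = \pm 1$.

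After passing to $f^2$ if necessary, I may assume $m = 1$, so $\phi$ commutes with $A$ and $f(v, k) = (\phi(v) + s_k w, k)$ with $s_k := I + A + \cdots + A^{k-1}$. A fixed point at level $k$ requires $(I - \phi) v = s_k w$. If $\phi$ has $1$ as a $\q$-eigenvalue, a nonzero vector in $\ker(\phi - I) \cap \z^n$ (obtained by clearing denominators in any rational eigenvector) already gives a fixed point $(v, 0)$. Otherwise $I - \phi$ is invertible over $\q$ and $(I - \phi)\z^n$ has finite index in $\z^n$; since $\phi A = A \phi$, $A$ preserves this sublattice and acts invertibly on the finite quotient $F := \z^n/(I - \phi)\z^n$. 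By pigeonhole on $\{s_k w \bmod F\}$ I would find a repeat $s_{k_1} w \equiv s_{k_2} w$, which rearranges to $A^{k_1} s_{k_2 - k_1} w \equiv 0$ in $F$; applying $A^{-k_1}$ gives $s_k w \in (I - \phi)\z^n$ for $k := k_2 - k_1 \geq 1$, and then $v := (I - \phi)^{-1} s_k w \in \z^n$ yields a fixed point $(v, k)$.

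In either case $f$ has a fixed point $g \neq e$; since $G$ is torsion-free, $\langle g \rangle$ is infinite and lies in $\cap_{j \geq 0} f^j(G)$, contradicting strong scale-invariance. The main obstacle is the fixed-point construction in the subcase where $\phi$ lacks $1$ as an eigenvalue: producing an \emph{integer} rather than merely rational $v$ depends crucially on the interplay between the $A$-invariance of $(I - \phi)\z^n$ and the finiteness of $\z^n/(I - \phi)\z^n$. The case $1 \in \text{spec}(A)$ — where $\pi$ may be nontrivial — should admit a parallel but more delicate treatment after splitting off the generalized $1$-eigenspace of $A$.
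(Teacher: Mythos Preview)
Your overall architecture matches the paper's: characterize virtual nilpotence via the eigenvalues of $A$, show that the second-coordinate map vanishes on $\z^n\times\{0\}$ and hits $\pm 1$, extract a linear $\phi$ with $\phi A=A^m\phi$, force $m=\pm 1$, pass to $f^2$, and build a nontrivial fixed point. Your fixed-point step via pigeonhole on the finite quotient $\z^n/(I-\phi)\z^n$ is exactly the paper's Lemma~2.6 argument in different language.

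The genuine gap is the case $1\in\text{spec}(A)$, which you defer with the suggestion to ``split off the generalized $1$-eigenspace of $A$.'' That route is not clearly workable: until you know $\pi=0$, the map $\phi$ is not even additive---it only satisfies $\phi(v+w)=\phi(v)+A^{\pi(v)}\phi(w)$---so there is no $A$-invariant decomposition for $\phi$ to respect, and the conjugacy $\phi A\phi^{-1}=A^m$ on which your eigenvalue argument rests is unavailable. The paper instead proves $\pi\equiv 0$ uniformly, with no case split, by a short dimension count. From $\phi(v+w)=\phi(w+v)$ one obtains $(I-A^{\pi(w)})\phi(v)=(I-A^{\pi(v)})\phi(w)$; hence for every $v\in\ker\pi$ and every $w\in\z^n$ one has $\phi(v)\in\ker(I-A^{\pi(w)})$. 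Now $\det A=\pm 1$ together with the existence of an eigenvalue that is not a root of unity forces $A$ to have at least \emph{two} eigenvalues off the unit circle, so $\dim_\q\ker(I-A^s)\le n-2$ for every nonzero integer $s$. If some $\pi(w)\neq 0$, this bounds the rank of $\phi(\ker\pi)$ by $n-2$; but $\ker\pi$ has rank at least $n-1$ and injectivity of $f$ on $\ker\pi\times\{0\}$ forces $\phi|_{\ker\pi}$ to be injective---a contradiction. Thus $\pi=0$ in all cases, after which your argument goes through verbatim.
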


The rest of the paper is organized as follows: in Section \ref{Preliminaries} we prove a few technical results required for Theorem \ref{The: main}. In Section \ref{Main} we provide proofs of several statements, assuming that $G$ admits an injective endomorphism but is not virtually nilpotent, that together imply Theorem \ref{The: main}. 

\section{Preliminaries}

\label{Preliminaries}

A semi-direct product $G=\z^n\rtimes_A \z$ is given by a matrix $A\in \text{GL}_n(\z)$. Recall that the group law on such groups is given by the following product: $$(v,z)\star (w,c):=(v+A^zw,z+c).$$ In future use we will omit the notation $\star$ for convenience, and we will write $\z^n \rtimes \z$ if the matrix $A$ is either known from the context or if it is not important. Denote by $e_i\in \z^n$ for $i=1,...,n$ the $i$-th standard coordinate vector. Furthermore, with $I\in \text{GL}_n(\z)$ we will mean the identity matrix. 

Our goal is to give an elementary proof of Theorem \ref{The: main}. So, in particular, we must specify under which condition $G$ (or a subgroup of $G$) is nilpotent. We use the following definition.

\begin{definition}[\cite{Rotman}, Section 5.3]
    Let $H$ be a group. Iteratively define the groups $\gamma_k(H)$ by $\gamma_0(H)=H$ and for $k>0$: $$\gamma_k(H)=[\gamma_{k-1}(H),H].$$

    \begin{itemize}
        \item We call $H$ \textbf{nilpotent} if there exists some $k_0\in \z_{\geq 0}$ such that for all $k\geq k_0$ we have $\gamma_k(H)=\{0\}$.
        \item We call $H$ \textbf{virtually nilpotent} if $H$ contains a nilpotent finite index subgroup.
    \end{itemize}
\end{definition}

A direct computation shows that, for semi-direct products $G$ as above, the commutator subgroup $\gamma_1(G)$ is generated by elements of the form $$[(v,z),(w,c)]=((A^z-I)w-(A^c-I)v,0).$$ Notice that if $z=0$, the term $A^z-I$ vanishes. It follows that the groups $\gamma_k(G)$ are generated by all elements of the form \begin{align}
\Big((I-A^{c_{k-1}})\cdots (I-A^{c_1})((A^z-I)w-(A^c-I)v),0\Big)
\label{Eq: 1}
\end{align} for integers $z,c,c_1,...,c_{k-1}$ and any $v,w\in \z^n$. Next, it is also useful to know some elements that are contained in finite index subgroups of $G$.

\begin{lemma}
    Let $H\subseteq G=\z^n\rtimes_A \z$ be a finite index subgroup. Then for each $v\in \z^n$ there exists a non-zero integer $a$ such that $(av,0)\in H$. Similarly, there exists a non-zero integer $b$ such that $(0,b)\in H$.

    \label{Le: finite index, then contains (a,0)}
\end{lemma}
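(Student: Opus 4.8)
The plan is to invoke the elementary fact that a finite-index subgroup absorbs a positive power of every element of the ambient group. Write $m=[G:H]$. Given any $g\in G$, among the $m+1$ cosets $H, gH, g^2H,\dots,g^mH$ two must coincide, say $g^iH=g^jH$ with $0\le i<j\le m$; then $g^{j-i}\in H$ and $1\le j-i\le m$. So every $g\in G$ admits an exponent $k\in\{1,\dots,m\}$ with $g^k\in H$.

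I would then specialize this to the two relevant one-parameter subgroups. For the first assertion, fix $v\in\z^n$ and apply the above to $g=(v,0)$. Since the $\z$-coordinate vanishes, the product formula $(v,z)\star(w,c)=(v+A^zw,z+c)$ reduces to $(v,0)\star(w,0)=(v+w,0)$, so $(v,0)^k=(kv,0)$ for every $k\ge 1$. Choosing $k$ as above yields $(kv,0)\in H$, and $a:=k$ is non-zero.

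For the second assertion, apply the same fact to $g=(0,1)$. Now $(0,1)\star(0,c)=(0+A\cdot 0,1+c)=(0,1+c)$, hence $(0,1)^k=(0,k)$ for all $k\ge 1$; the corresponding exponent $k$ gives $(0,k)\in H$, and $b:=k$ is non-zero.

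No real obstacle is expected here. The only points needing care are that the exponent extracted from the pigeonhole argument is strictly positive (so that $a$ and $b$ are genuinely non-zero rather than possibly $0$), and that the semidirect-product multiplication really does collapse to ordinary addition on the subsets $\{(w,0):w\in\z^n\}$ and $\{(0,c):c\in\z\}$ — both immediate from the displayed product, since in each case one of the entries being multiplied has zero $\z$-coordinate (or is the zero vector), killing the twist by $A$.
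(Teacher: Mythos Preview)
Your proof is correct and follows essentially the same approach as the paper: both use the pigeonhole principle on cosets to show that some positive power of the relevant element lies in $H$, and then verify that powers of $(v,0)$ and $(0,1)$ in $G$ are computed by ordinary addition in the respective coordinate. The paper carries out the coset-collision argument directly for the elements $(av,0)$, while you first state the general fact that $g^k\in H$ for some $k\in\{1,\dots,m\}$ and then specialize; these are the same argument.
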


\begin{proof}

    Choose $v\in \z^n$. As $H$ is finite index in $G$, there must exist $a,a'\in \z$ with $a\neq a'$ such that we have an equality of cosets $(av,0)H=(a'v,0)H$. Multiplying on the left with $(-a'v,0)$ gives $$((a-a')v,0)H=(-a'v,0)(av,0)H=(-a'v,0)(a'v,0)H=H.$$ Hence $((a-a')v,0)H=H$, which holds if and only if $((a-a')v,0)\in H$. 
    
    The argument for the $(0,b)$ is similar, where instead we look at the second coordinate.
\end{proof}

Recall that a matrix $N$ is called nilpotent, if there exists a positive integer $k$ such that $N^k=0$. The following result is well-known; we include a proof here for completeness.

\begin{proposition}
    Let $A\in \text{GL}_n(\z)$ be a matrix and consider the semi-direct product $G=\z^n\rtimes_A \z$ induced by $A$. The following are equivalent:

    \begin{itemize}
        \item[(i)] The group $G$ is virtually nilpotent.
        \item[(ii)] All eigenvalues of $A$ are roots of unity.
        \item[(iii)] There exists an integer $z\neq 0$ such that $A^z-I$ is nilpotent.
    \end{itemize}

    \label{Prop: Virtually nilpotent iff nilpotent matrix iff eigenvalues roots of unity}
\end{proposition}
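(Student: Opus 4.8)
The plan is to handle the purely linear-algebraic equivalence (ii) $\Leftrightarrow$ (iii) first, and then to link both conditions to (i) by way of the commutator description (\ref{Eq: 1}). For (ii) $\Rightarrow$ (iii): the eigenvalues of $A$ are the roots of its characteristic polynomial, hence at most $n$ of them; if each is a root of unity, their orders have a common multiple $m>0$, so every eigenvalue of $A^m$ equals $1$, every eigenvalue of $A^m-I$ is $0$, and by Cayley--Hamilton $(A^m-I)^n=0$, so $z=m$ works. For (iii) $\Rightarrow$ (ii): if $A^z-I$ is nilpotent with $z\neq0$, all its eigenvalues vanish, so every eigenvalue of $A^z$ equals $1$; as $A\in\text{GL}_n(\z)$ is invertible, the eigenvalues of $A^z$ are exactly the $z$-th powers of the eigenvalues of $A$ (also when $z<0$), so every eigenvalue $\mu$ of $A$ satisfies $\mu^z=1$ and is a root of unity.

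For (iii) $\Rightarrow$ (i): fix $z\neq0$ with $A^z-I$ nilpotent. The subgroup $H\leq G$ generated by $\z^n\times\{0\}$ and $(0,z)$ is exactly $\{(v,zk):v\in\z^n,\ k\in\z\}$, which has index $|z|$ and is isomorphic to $\z^n\rtimes_{A^z}\z$ via $(v,k)\mapsto(v,zk)$. Hence it suffices to prove: if $N:=A-I$ is nilpotent then $G=\z^n\rtimes_A\z$ is nilpotent, so assume $A=I+N$ with $N$ nilpotent. The key point is that for every integer $c$ the matrix $A^c-I=(I+N)^c-I$ is a polynomial in $N$ with integer coefficients and no constant term: for $c\geq0$ this is the binomial theorem, and for $c<0$ it follows from $(I+N)^{-1}=\sum_{j\geq0}(-N)^j$ being a finite sum. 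Thus $A^c-I\in N\cdot\mat_n(\z)$ for all $c$. Plugging this into (\ref{Eq: 1}), each of the $k-1$ factors $I-A^{c_j}$ and the trailing term $(A^z-I)w-(A^c-I)v$ carries a factor of $N$, and since all these matrices are polynomials in $N$ they commute, so every generator of $\gamma_k(G)$ lies in $N^k\z^n\times\{0\}$. For $k$ at least the nilpotency index of $N$ we get $\gamma_k(G)=\{0\}$, so $G$ is nilpotent, and the original group is virtually nilpotent.

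For (i) $\Rightarrow$ (iii): let $H\leq G$ be a nilpotent finite-index subgroup with $\gamma_k(H)=\{0\}$. By Lemma \ref{Le: finite index, then contains (a,0)} there is a nonzero $b\in\z$ with $(0,b)\in H$, and for each $i$ a nonzero $a_i\in\z$ with $(a_ie_i,0)\in H$; put $B=A^b$. From $[(v,z),(w,c)]=((A^z-I)w-(A^c-I)v,0)$ one computes $[(0,b),(a_i(B-I)^{j}e_i,0)]=(a_i(B-I)^{j+1}e_i,0)$, so an easy induction gives $(a_i(B-I)^{k}e_i,0)\in\gamma_k(H)=\{0\}$ for every $i$. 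Hence $a_i(B-I)^{k}e_i=0$ in $\z^n$, and since $\z^n$ is torsion-free and $a_i\neq0$ we get $(B-I)^{k}e_i=0$ for all $i$, that is $(A^b-I)^{k}=0$ with $b\neq0$. This is (iii), which together with the above closes the chain (i) $\Rightarrow$ (iii) $\Rightarrow$ (i) and (ii) $\Leftrightarrow$ (iii).

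The step I expect to be most delicate is the $N$-divisibility bookkeeping in (iii) $\Rightarrow$ (i): one has to treat negative exponents on the same footing as positive ones (this is precisely where invertibility of $A$ together with nilpotence of $N$ lets one write $(I+N)^c$ as an honest polynomial in $N$), and one has to make sure the $k-1$ commuting factors appearing in (\ref{Eq: 1}) genuinely accumulate $k$, not fewer, powers of $N$.
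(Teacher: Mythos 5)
Your proof is correct. You close the equivalence by a different cycle than the paper: you do (ii) $\Leftrightarrow$ (iii) purely by linear algebra, then (iii) $\Rightarrow$ (i) and (i) $\Rightarrow$ (iii), whereas the paper runs (i) $\Rightarrow$ (ii) $\Rightarrow$ (iii) $\Rightarrow$ (i). Within those steps there are two genuine departures worth noting. First, for (ii) $\Rightarrow$ (iii) you use that $A^m-I$ has all eigenvalues $0$ and invoke Cayley--Hamilton to get $(A^m-I)^n=0$, whereas the paper conjugates to Jordan form and observes $J^r-I$ is strictly upper triangular; both are fine, and yours avoids introducing a change of basis over $\c$. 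Second, and more substantively, for (iii) $\Rightarrow$ (i) you first identify the index-$|z|$ subgroup $H=\langle\z^n\times\{0\},(0,z)\rangle$ with $\z^n\rtimes_{A^z}\z$, reducing to the case $z=1$, and then exploit that $(I+N)^c-I$ lies in $N\cdot\mat_n(\z)$ for every integer $c$ (positive or negative), so that each of the $k-1$ outer factors in (\ref{Eq: 1}) and the inner term each contribute a commuting factor of $N$. The paper instead keeps $z$ general and uses the telescoping identities $A^{rz}-I=(A^z-I)(A^{(r-1)z}+\cdots+I)$ and $A^{-rz}-I=-A^{-rz}(A^{rz}-I)$ to extract a factor of $A^z-I$ from each term of (\ref{Eq: 1}); your reduction buys a cleaner bookkeeping at the cost of one extra structural observation about $H$. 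Your (i) $\Rightarrow$ (iii) argument is essentially the paper's (i) $\Rightarrow$ (ii): both iterate the commutator with $(0,b)$ to force $(A^b-I)^k=0$; the paper simply pushes one step further, from that nilpotence to roots of unity, because it heads to (ii) rather than (iii).
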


\begin{proof}
    \hyperref[Prop: Virtually nilpotent iff nilpotent matrix iff eigenvalues roots of unity]{(i) $\Rightarrow$ (ii)}: Suppose that $G$ contains a nilpotent finite index subgroup $H$. Then there exists some $k$ such that $\gamma_k(H)$ is trivial. Consider any $v\in \z^n$ and choose $a,b\in \z\setminus \{0\}$ such that $(av,0)\in H$ and $(0,b)\in H$, which is possible by Lemma \ref{Le: finite index, then contains (a,0)}.

    Let $v_0:=(av,0)\in H=\gamma_0(H)$, and inductively define $$v_i:=[v_{i-1},(0,b)]\in \gamma_i(H)=[\gamma_{i-1}(H),H].$$ By applying the formula for commutators as given in (\ref{Eq: 1}), we get $$v_k=(a (I-A^b)^kv,0)\in \gamma_k(H)=\{(0,0)\}.$$ Hence $a (I-A^b)^kv=0$, so as $a\neq 0$, we obtain $$(I-A^b)^kv=0,$$ which must hold for every $v\in \z^n$. Therefore $(I-A^b)^k$ must be the zero matrix. If $\lambda$ is any eigenvalue of $A$ with an eigenvector $w$, then we get $$0=(I-A^b)^kw=(1-\lambda^b)^kw.$$ We conclude that $1-\lambda^b=0$ holds, so that $\lambda$ must be a root of unity.

    \hyperref[Prop: Virtually nilpotent iff nilpotent matrix iff eigenvalues roots of unity]{(ii)$\Rightarrow$(iii)}: Let $\lambda_1,...,\lambda_k\in \c$ denote the eigenvalues of $A$, and let $r$ be some positive integer such that $1=\lambda_1^r=...=\lambda_k^r$. Let $J$ denote the (complex) Jordan normal form of $A$ and let $S$ be an invertible matrix with $A=SJS^{-1}$. As the eigenvalues lie on the diagonal of $J$, the matrix $J^r-I$ is a strictly upper triangular matrix. So in particular it is nilpotent. Now we can write $$A^r-I=SJ^rS^{-1}-I=S(J^r-I)S^{-1},$$ so that $A^r-I$ is also nilpotent.

    \hyperref[Prop: Virtually nilpotent iff nilpotent matrix iff eigenvalues roots of unity]{(iii)$\Rightarrow$(i)}: Now suppose there exists some non-zero integer $z$ such that $A^z-I$ is nilpotent. Say, $(A^z-I)^k=0$ for a positive integer $k$. Consider the finite index subgroup $H\leq G$ generated by all $(e_i,0)$ and $(0,z)$.

First of all, we notice that for any positive integer $r$ we have $$A^{rz}-I=(A^z-I)(A^{(r-1)z}+A^{(r-2)z}+...+I)$$ and $$A^{-rz}-I=-A^{-rz}(A^{rz}-I)$$ so that in particular for any choice of integers $r_1,...,r_k$ there exists some matrix $B\in \z[A]$ such that $$(A^{r_kz}-I)(A^{r_{k-1}z}-I)\cdots (A^{r_1z}-I)=(A^z-I)^kB = 0.$$ Here we can interchange the order of the expressions as they are all polynomials in the commutative ring $\z[A]$. The last coordinate of any element $(w,c)\in H$ will be a multiple of $z$, so using our computations of $\gamma_k(H)$ as in (\ref{Eq: 1}), we get that $\gamma_k(H)=0$. That is, $H$ is nilpotent.
\end{proof}

If $G=\z^n\rtimes_A \z$ is not virtually nilpotent, then by Proposition \ref{Prop: Virtually nilpotent iff nilpotent matrix iff eigenvalues roots of unity} and since $\det(A)\in \{\pm1\}$, the matrix $A$ must have at least two eigenvalues of absolute value not equal to 1. Thus we obtain the following.

\begin{lemma}
        Let $A\in \text{GL}_n(\z)$ be a matrix with at least one eigenvalue that is not a root of unity. Then for all non-zero integers $s$ the dimension of $\ker(A^s-I)$ is at most $n-2$. 

        \label{Le: dim ker A^s-I at most n-2}
    \end{lemma}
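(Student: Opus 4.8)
The plan is to work with the multiset $\lambda_1,\dots,\lambda_n$ of complex eigenvalues of $A$, listed with algebraic multiplicity, and to exploit the constraint $\det A\in\{\pm 1\}$. The starting point is the elementary inequality
$$\dim\ker(A^s-I)\ \le\ \#\{\,i : \lambda_i^s=1\,\}\qquad\text{for all }s\neq 0 .$$
Here $\ker(A^s-I)$ is the eigenspace of $A^s$ for the eigenvalue $1$, so its dimension — the geometric multiplicity of $1$ for $A^s$ — is at most the algebraic multiplicity of $1$ for $A^s$; and this algebraic multiplicity equals $\#\{i:\lambda_i^s=1\}$ because the eigenvalues of $A^s$, listed with multiplicity, are exactly $\lambda_1^s,\dots,\lambda_n^s$ (put $A$ in complex Jordan normal form, as in the proof of Proposition \ref{Prop: Virtually nilpotent iff nilpotent matrix iff eigenvalues roots of unity}). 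One should note that the left-hand side is unambiguous: it equals $n$ minus the rank of the integer matrix $A^s-I$, which is the same over $\q$ as over $\c$.

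Next I would argue by contradiction. Suppose $\dim\ker(A^s-I)\ge n-1$ for some non-zero $s$. By the inequality above, at least $n-1$ of the $\lambda_i$ satisfy $\lambda_i^s=1$ and are therefore roots of unity; relabelling, say this holds for $\lambda_2,\dots,\lambda_n$ (if it holds for all $n$ of them the argument below only gets easier). Now invoke $A\in\text{GL}_n(\z)$: both $\det A$ and $\det A^{-1}$ are integers, so $\det A=\lambda_1\lambda_2\cdots\lambda_n\in\{\pm 1\}$, whence
$$\lambda_1=\frac{\det A}{\lambda_2\cdots\lambda_n}=\pm\,\zeta^{-1},$$
where $\zeta:=\lambda_2\cdots\lambda_n$, being a product of roots of unity, is itself a root of unity. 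Hence $\lambda_1$ is also a root of unity, so \emph{every} eigenvalue of $A$ is a root of unity — contradicting the hypothesis. Therefore $\dim\ker(A^s-I)\le n-2$ for every non-zero integer $s$.

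The proof is short, so I do not anticipate a genuine obstacle; the point that needs care is the bookkeeping in the first step. One must phrase the bound with \emph{algebraic} multiplicities, so that the eigenvalues of $A^s$ really are the $s$-th powers of the eigenvalues of $A$ and the global identity $\prod_i\lambda_i=\det A$ can be brought to bear; the geometric multiplicity of $1$ for $A^s$ can be strictly smaller and would not suffice. A further small point: the case of negative $s$ is covered too, because $A$ is invertible (so the $\lambda_i$ are non-zero) and $\lambda_i^s=1$ is equivalent to $\lambda_i^{|s|}=1$, again a root of unity. Finally, I note that a tempting but heavier route — first using Kronecker's theorem to produce an eigenvalue of modulus $>1$, and then $|\det A|=1$ to produce one of modulus $<1$ — is rendered unnecessary by the determinant identity above, which keeps the proof entirely self-contained.
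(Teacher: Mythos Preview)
Your proof is correct. The paper does not give a standalone proof of this lemma; it is justified by the one sentence preceding the statement, which says that since $\det A\in\{\pm1\}$, the matrix $A$ must have at least two eigenvalues of absolute value not equal to $1$. That route goes from ``some eigenvalue is not a root of unity'' to ``some eigenvalue has $|\lambda|\neq 1$'', a step that tacitly uses Kronecker's theorem on algebraic integers, and then uses $|\det A|=1$ to force a second such eigenvalue. Your argument bypasses absolute values entirely: you use the determinant identity $\prod_i\lambda_i=\pm1$ to show directly that if $n-1$ of the $\lambda_i$ are roots of unity then so is the last one. This is strictly more elementary and better matches the paper's declared aim of avoiding heavy machinery; the paper's phrasing, on the other hand, yields the slightly stronger side information that two eigenvalues lie off the unit circle, which is not needed anywhere else.
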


    Finally, nearing the end of the proof of Proposition \ref{Prop: properties of f}, we will use the following fact about invertible matrices over $\z$.

    \begin{lemma}
    Let $A\in \text{GL}_n(\z)$ and for each $m\in \z_{>0}$ define the matrix $$B_m:=\sum_{k=0}^{m-1} A^k.$$ Then, for each non-zero integer $r$, there exists some $m$ such that all entries of $B_m$ are divisible by $r$.

    \label{Le: sums of powers will eventually be divisible}
\end{lemma}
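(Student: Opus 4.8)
The plan is to reduce everything modulo $r$ and exploit that, because $A\in\text{GL}_n(\z)$, its reduction lands in the \emph{finite} group $\text{GL}_n(\z/r\z)$ and hence has finite multiplicative order.

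First I would note that divisibility of the entries of $B_m$ by $r$ is the same as divisibility by $|r|$, so one may assume $r>0$. Reducing the entries of $A$ modulo $r$ yields a matrix $\bar A\in\text{Mat}_n(\z/r\z)$, and since $\det A\in\{\pm1\}$ the reduced determinant $\det\bar A$ is a unit in $\z/r\z$; thus $\bar A\in\text{GL}_n(\z/r\z)$, a finite group. Therefore there is a positive integer $d$ with $\bar A^{\,d}=I$ in $\text{GL}_n(\z/r\z)$, i.e.\ every entry of $A^{d}-I$ is divisible by $r$.

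Next I would take $m:=dr$ and split the sum defining $B_{dr}$ into $r$ consecutive blocks of length $d$:
\[
B_{dr}=\sum_{k=0}^{dr-1}A^{k}=\sum_{j=0}^{r-1}\sum_{i=0}^{d-1}A^{jd+i}=\sum_{j=0}^{r-1}A^{jd}B_d .
\]
Since $A^{jd}\equiv I\pmod r$ for every $j$, each summand $A^{jd}B_d$ is congruent to $B_d$ modulo $r$, so $B_{dr}\equiv rB_d\equiv 0\pmod r$; hence all entries of $B_{dr}$ are divisible by $r$, and $m=dr$ works.

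I do not expect a serious obstacle here: the only delicate point is the assertion that $\bar A$ is invertible modulo $r$, which is precisely where the hypothesis $A\in\text{GL}_n(\z)$ (equivalently $\det A=\pm1$) is needed — over a general integer matrix the conclusion can fail — and the remainder is just the routine block decomposition of the partial sum $B_m$ together with the observation $A^{jd}\equiv I$.
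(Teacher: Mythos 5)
Your proof is correct and follows essentially the same route as the paper: reduce $A$ modulo $r$, use $\det A=\pm1$ to see that $\bar A$ lies in the finite group $\text{GL}_n(\z/r\z)$ and hence has some finite order $d$, and then observe that $B_{dr}$ reduces to $r$ copies of $B_d$ modulo $r$, which vanishes. The paper writes this as $\widetilde{B_{rl}}=r(\widetilde I+\widetilde A+\cdots+\widetilde A^{\,l-1})=0$ rather than via your explicit block decomposition, but the two computations are the same.
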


\begin{proof}

    Without loss of generality we may assume $r>1$. For this proof, let $R:=\mat_n(\z/r\z)$ and for any matrix $M\in \mat_n(\z)$ let $\widetilde{M}\in R$ denote the matrix with the same entries as $M$ viewed as elements of $\z/r\z$. Notice that reducing mod $r$ commutes with adding and multiplying matrices.

    As $\det(A)=\pm 1$, the determinant of $\widetilde{A}$ must be an invertible element of $\z/r\z$. Hence $\widetilde{A}$ is invertible over $R$. However, $R$ is a finite ring, so $\widetilde{A}$ must have finite (multiplicative) order. Say $\widetilde{A}^l=\widetilde{I}$ for $l\in \z_{>0}$. Now for $m=r\cdot l$ we get: $$\widetilde{B_{r\cdot l}}=\sum_{k=0}^{r\cdot l-1}\widetilde{A}^k=r\left(\widetilde{I}+\widetilde{A}+...+\widetilde{A}^{l-1}\right)=0.$$ But for any $m$, the matrix $\widetilde{B_m}$ is the reduction of the matrix $B_m$. So with $m=r\cdot l$ the matrix $B_m$ must only have entries that are all divisible by $r$.
    
\end{proof}

\section{Main result}

\label{Main}

Using the tools obtained in Section \ref{Preliminaries}, we can point out some properties of injective endomorphisms $f:G\to G$, where we assume that $G$ is not virtually nilpotent. Every statement in this proposition holds given all previous statements, and we use that $G$ is not virtually nilpotent only in the proof of (ii).

\begin{proposition}
    Fix some integer $n>0$ and let $A\in \text{GL}_n(\z)$. Let $G:=\z^n\rtimes_A\z$ and suppose $f:G\to G$ is an injective homomorphism. Let $g:G\to \z^n$ and $h:G\to \z$ be maps such that $f(v,z)=(g(v,z),h(v,z))$. If $G$ is not virtually nilpotent, then the following statements hold:

    \begin{itemize}
        \item[(i)] Consider the set $\z^{n+1}=\z^n\times \z$ with either the Abelian group structure, or the group structure obtained from the semi-direct product given by $A$. Then the set-wise map $h:\z^{n+1}\to \z$ becomes a group homomorphism.
        \item[(ii)] We have an inclusion $\z^n\times \{0\}\subseteq \ker h$.
        \item[(iii)] There exists an invertible matrix $F\in \text{Mat}_n(\z)$ such that for all $v\in \z^n$ we have $g(v,0)=Fv$.
        \item[(iv)] The homomorphism $h:G\to\z$ is surjective.
        \item[(v)] The composition $f^2:=f\circ f$ has a non-trivial fixed point.
    \end{itemize}

    \label{Prop: properties of f}
\end{proposition}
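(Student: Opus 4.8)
The plan is to prove the five parts in the stated order, at each step freely using the earlier ones. Throughout write $H_0:=\z^n\times\{0\}$ for the canonical normal copy of $\z^n$ in $G$, and $\pi:G\to\z$ for the projection onto the second coordinate, so that $h=\pi\circ f$. Part (i) is an unpacking of the homomorphism property: $h=\pi\circ f$ is a composite of homomorphisms, so it is a homomorphism for the semi-direct structure, and expanding the identity $h\big((v,z)\star(w,c)\big)=h(v,z)+h(w,c)$, that is $h(v+A^zw,\,z+c)=h(v,z)+h(w,c)$, and specialising in turn to $w=0$, to $v=w=0$, to $z=c=0$, and to $v=0$, one reads off that $h(v,z)=\alpha(v)+\beta(z)$ for additive maps $\alpha:\z^n\to\z$ and $\beta:\z\to\z$ with $\alpha\circ A=\alpha$. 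Additivity of $\alpha$ and $\beta$ then makes $h$ a homomorphism for the Abelian structure as well, giving (i).

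Part (ii) is the heart of the argument and the place where not being virtually nilpotent does the real work (through Proposition \ref{Prop: Virtually nilpotent iff nilpotent matrix iff eigenvalues roots of unity}, which guarantees that $A$ has an eigenvalue that is not a root of unity). Suppose, for contradiction, that $\alpha\neq 0$. Then $f(H_0)$ is an abelian subgroup of $G$ which, as $f$ is injective, is free abelian of rank $n$, and which is not contained in $H_0$, since $h(v,0)=\alpha(v)$ is not identically zero; pick $v_1$ with $a:=\alpha(v_1)\neq 0$, so that $f(v_1,0)$ has nonzero second coordinate $a$. Any $(w,0)\in f(H_0)\cap H_0$ commutes in $G$ with $f(v_1,0)$, and writing out the commuting condition forces $(A^a-I)w=0$; hence $f(H_0)\cap H_0\subseteq\ker(A^a-I)\times\{0\}$. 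On the other hand $f(H_0)\cap H_0=f\big((\ker\alpha)\times\{0\}\big)$ is free abelian of rank $n-1$, since $\ker\alpha\leq\z^n$ has rank $n-1$ and $f$ is injective. Comparing ranks yields $n-1\leq\dim_{\q}\ker(A^a-I)$, contradicting Lemma \ref{Le: dim ker A^s-I at most n-2}. Hence $\alpha=0$, i.e.\ $H_0\subseteq\ker h$, which is (ii).

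For (iii) and (iv): by (ii), $f(H_0)\subseteq H_0$, so $f|_{H_0}$ is an injective endomorphism of $H_0\cong\z^n$, hence multiplication by a matrix $F\in\mat_n(\z)$ with $\det F\neq 0$; that is, $g(v,0)=Fv$, proving (iii). Now put $t:=h(0,1)$ and apply the homomorphism $f$ to the identity $(0,1)\star(v,0)\star(0,1)^{-1}=(Av,0)$; a short computation, using (ii) and (iii), gives the relation $FA=A^tF$, so $A$ and $A^t$ are conjugate over $\q$ and in particular share their eigenvalues with multiplicity. Since $A$ has an eigenvalue of absolute value greater than $1$ and one of absolute value less than $1$ (as recorded just before Lemma \ref{Le: dim ker A^s-I at most n-2}), comparing the largest and smallest moduli of eigenvalues of $A$ with those of $A^t$ rules out $|t|\geq 2$, while $t=0$ would force $A=I$; hence $t\in\{1,-1\}$, and therefore $h(G)=\beta(\z)=t\z=\z$, which is (iv).

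For (v): by (ii)--(iv), $h\big(f(v,z)\big)=t^2z=z$, so $f^2$ preserves the second coordinate; a short computation, using $f^2(v,0)=(F^2v,0)$ and writing $f^2(0,1)=(p,1)$ for some $p\in\z^n$, gives $f^2(v,z)=\big(F^2v+B_zp,\;z\big)$ for $z\geq 0$, where $B_z:=\sum_{k=0}^{z-1}A^k$. Hence $(v,z)$ is a fixed point of $f^2$ exactly when $(I-F^2)v=B_zp$. If $\det(I-F^2)=0$, then $\ker(I-F^2)$ contains a nonzero integer vector $v_0$, and $(v_0,0)$ is a non-trivial fixed point. Otherwise, with $r:=|\det(I-F^2)|$, the quotient $\z^n/(I-F^2)\z^n$ has order $r$, so $r\z^n\subseteq(I-F^2)\z^n$; Lemma \ref{Le: sums of powers will eventually be divisible} then supplies $m>0$ with every entry of $B_m$ divisible by $r$, so $B_mp\in r\z^n\subseteq(I-F^2)\z^n$, and solving $(I-F^2)v=B_mp$ over $\z$ gives a non-trivial fixed point $(v,m)$. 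I expect the single genuine obstacle to be part (ii): parts (i) and (iii)--(v) are essentially forced computations or short appeals to the preliminary lemmas, whereas (ii) is precisely where the injectivity of $f$ must be set against $G$ not being virtually nilpotent, via the rank bound of Lemma \ref{Le: dim ker A^s-I at most n-2}.
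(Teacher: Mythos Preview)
Your proof is correct and follows essentially the same strategy as the paper's, including the same split into cases in (v) and the same appeal to Lemma \ref{Le: sums of powers will eventually be divisible}. The only noticeable difference is the packaging of (ii): where the paper introduces $V=\ker\alpha$ and $X=g(V\times\{0\})$, shows $X\subseteq\ker(I-A^{h(w,0)})$ by computing $g(v+w,0)$ two ways, and then contradicts injectivity via the dimension drop $V_{\q}\to X_{\q}$, you observe directly that $f(H_0)\cap H_0=f\big((\ker\alpha)\times\{0\}\big)$ (which is exactly the paper's $X\times\{0\}$), use abelianness of $f(H_0)$ to force this intersection into $\ker(A^a-I)\times\{0\}$, and compare ranks. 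These are the same argument in slightly different clothing; your phrasing is arguably a little more transparent, but neither gains any real generality over the other.
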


\begin{proof}

   Before we prove the statements, we consider some facts about $f$. Using that $f$ is a group homomorphism, we have $g(0,0)=0,h(0,0)=0$, and $$f(v,z)f(w,c)=(g(v,z)+A^{h(v,z)}g(w,c),h(v,z)+h(w,c))$$ equals $$f(v+A^zw,z+c)=(g(v+A^zw,z+c),h(v+A^zw,z+c)).$$

    This gives us two equations \begin{align}
        g((v,z)(w,c))=g(v+A^zw,z+c)&=g(v,z)+A^{h(v,z)}g(w,c)\label{Eq: 2}\\
        h((v,z)(w,c))=h(v+A^zw,z+c)&=h(v,z)+h(w,c) \label{Eq: 3}
    \end{align}

    \textit{Proof of} \hyperref[Prop: properties of f]{\textit{(i)}} Recall that the group operation of $G$ is given by $$(v,z)(w,c)=(v+A^zw,z+c).$$ In particular this product adds the last two coordinates together, so that the projection map $G\to \z$ on the last coordinate is a group homomorphism. Post-composing $f$ with this projection gives us $h$, showing that $h:G\to \z$ is a group homomorphism. 

    Choosing $w=0$ and $z=0$ in (\ref{Eq: 3}) gives $h(v,z)=h(v,0)+h(0,c)$. So since $(v,0)(w,0)=(v+w,0)$ and $(0,z)(0,c)=(0,z+c)$, we obtain $$h(v+w,z+c)=h(v,0)+h(w,0)+h(0,z)+h(0,c)=h(v,z)+h(w,c).$$ That is, $h:\z^{n+1}\to \z$ is also a group homomorphism.

    \textit{Proof of} \hyperref[Prop: properties of f]{\textit{(ii)}} Consider the following two sets $$V:=\{v\in \z^n\mid h(v,0)=0\}$$ $$X:=\{g(v,0)\in \z^n\mid v\in V\}$$ If we view $\z^n$ as the subgroup $\z^n\times\{0\}\subset G$, then $V$ is the kernel of the induced group homomorphism $h|_{\z^n}:\z^n\times \{0\}\to \z$. Hence $V$ is a group itself. But then for $v,w\in V$ we have \begin{align}g(v+w,0)=g(v,0)+A^{h(v,0)}g(w,0)=g(v,0)+g(w,0)\label{Eq: 4}\end{align} so that $X$ is also a group.

    As they are subgroups of $\z^n$, we have induced $\q$-vector spaces $V_\q:=V\otimes \q$ and $X_\q:= X\otimes \q$, where $V_\q$ is the kernel of the $\q$-linear map $h\otimes \q:\q^n\to \q$. In particular $V_\q$ has dimension at least $n-1$. 

    Next, let $v\in V$ and $w\in \z^n$. Then we have two ways of writing $g(v+w,0)$. Namely: $$g(v,0)+g(w,0)=g(v+w,0)=g(w+v,0)=g(w,0)+A^{h(w,0)}g(v,0).$$ Therefore $g(v,0)=A^{h(w,0)}g(v,0)$, and in turn this implies \begin{align}
        (I-A^{h(w,0)})g(v,0)=0. \label{Eq: 5}
    \end{align} Notice that this implies that $X\subseteq \ker(I-A^{h(w,0)})$. If we embed $X_\q$ in $\q^n$ by taking $\q$-linear combinations, then it also implies $X_\q\subseteq \ker(I-A^{h(w,0)})$. 
    
    Assume that $h(w,0)\neq 0$. Then by Lemma \ref{Le: dim ker A^s-I at most n-2}, we see that $X_\q$ has dimension at most $n-2$. By (\ref{Eq: 4}) we have a $\q$-linear map $g\otimes \q:V_\q\to X_\q$, so by comparing dimensions the kernel must have dimension at least 1. Multiplying by some integer shows that there exists some non-zero $v\in V$ such that $g(v,0)=0$. Hence we get $$f(v,0)=(g(v,0),h(v,0))=(0,0),$$ contradicting that $f$ is injective. Hence we are not allowed to use Lemma \ref{Le: dim ker A^s-I at most n-2}, so we must have $h(w,0)=0$. As a result, $h(\z^n\times \{0\})=\{0\}$.

    \textit{Proof of} \hyperref[Prop: properties of f]{\textit{(iii)}} Firstly, by (ii) for any $v,w\in \z^n$ we have (\ref{Eq: 4}). So we must have some matrix $F\in \mat_n(\z)$ such that for all $v\in \z^n$ we have $g(v,0)=Fv$. If $F$ would not be invertible, then there exists some non-zero $v\in \z^n$ such that $Fv=0$. Hence we get $$f(v,0)=(Fv,h(v,0))=(0,0),$$ which contradicts that $f$ is injective.

    \textit{Proof of} \hyperref[Prop: properties of f]{\textit{(iv)}} Let $v\in \z^n$. Then we have: $$FAv=g(Av,0)=g((0,1)(v,0)(0,-1))=g(0,1)+A^{h(0,1)}g(v,0)+A^{h(0,1)}g(0,-1).$$ But observe that $$0=g(0,1-1)=g(0,1)+A^{h(0,1)}g(0,-1).$$ Therefore we obtain the equation $$FAv=A^{h(0,1)}g(v,0)=A^{h(0,1)}Fv.$$ Since this must hold for all $v\in \z^n$, we have $$FAF^{-1}=A^{h(0,1)},$$ so that $A$ and $A^{h(0,1)}$ are similar matrices. In particular their eigenvalues must coincide, so since by Proposition \ref{Prop: Virtually nilpotent iff nilpotent matrix iff eigenvalues roots of unity} we may assume that $A$ has eigenvalues that are not roots of unity, we must have $h(0,1)\in \{\pm 1\}$. Hence $h$ is surjective.

    \textit{Proof of} \hyperref[Prop: properties of f]{\textit{(v)}} By (ii) and (iv) we have $h(0,1)\in \{\pm 1\}$. We can take $f^2=f\circ f$ instead of $f$, so that $h(0,1)=1$. By (i) and (ii) we obtain the equation $h(v,z)=z$. We consider two cases: when $F-I$ is invertible, and when $F-I$ is not invertible.

    If it is not invertible, then there exists some non-zero $v\in \z^n$ such that $Fv=v$. In particular we have $$f(v,0)=(g(v,0),h(v,0))=(Fv,0)=(v,0),$$ so that $(v,0)$ is a fixed point of $f$. 

    If $F-I$ is invertible, then choose some positive integer $z$ such that $$(I-F)^{-1}(I+A+...+A^{z-1})$$ is an integer matrix. This is possible by Lemma \ref{Le: sums of powers will eventually be divisible} by choosing $r=\det(I-F)$. Now consider the pair $(v,z)$ with $$v:=(I-F)^{-1}(I+A+A^2+...+A^{z-1})g(0,1)\in \z^n.$$ Using that $$F(I-F)^{-1}=-(I-F)(I-F)^{-1}+(I-F)^{-1}=-I+(I-F)^{-1}$$ and $$g(0,z)=g(0,1)+Ag(0,1)+A^2g(0,1)+...+A^{z-1}g(0,1)=(I+A+...+A^{z-1})g(0,1),$$ where we have used (\ref{Eq: 2}) iteratively with $g(0,z)=g(0,1+(z-1))$, we get $$f(v,z)=(Fv+g(0,z),z)=(-g(0,z)+v+g(0,z),z)=(v,z).$$ Therefore, we have found a non-zero fixed point $(v,z)$ of $f$.
    
\end{proof}

As a consequence, we can prove the main theorem.

\begin{proof}[Proof of Theorem \ref{The: main}]
    Consider some semi-direct product $G=\z^n\rtimes \z$ and let $f:G\to G$ be a strongly scale-invariant homomorphism. 
    
    If $G$ is virtually nilpotent, we are done, so assume that $G$ is not virtually nilpotent. By Proposition \ref{Prop: properties of f} we get that $f^2$ has some non-zero fixed point $(v,z)$. Since for all positive integers $k$ we have $f^{k}(G)\subseteq f^{k-1}(G)$, we get $$(v,z)\in \bigcap_{k\geq 0} f^k(G).$$ However, $(v,z)$ has infinite order, as when $z\neq 0$, then $(v,z)^m$ has second coordinate equal to $z\cdot m$, and if $z=0$, then the first coordinate of $(v,z)^m$ is $v\cdot m$. We conclude that this intersection cannot be finite, which contradicts the assumption of $f$. That is, $G$ must be virtually nilpotent.
\end{proof}

\begin{acknowledgement}
    I would like to thank my supervisor Olga Lukina for all the continued help and support in writing my first paper. I am grateful for the help of Wim Nijgh for giving inspiration for the proof of Lemma \ref{Le: sums of powers will eventually be divisible}.
\end{acknowledgement}

Mathematical Institute \\ Leiden University \\ 
P.O. Box 9512 \\
2300 RA Leiden \\
The Netherlands\\\href{mailto:d.a.wardell@math.leidenuniv.nl}{d.a.wardell@math.leidenuniv.nl}


\begin{thebibliography}{A}

\bibitem{Dere}J. Der{\'e}, \emph{Strongly scale-invariant virtually polycyclic groups}, Groups, Geometry, and Dynamics, 16(3):985--1004, 2022.

\bibitem{NekrashevychPete} V. Nekrashevych, G. Pete, \emph{Scale-invariant groups}, Groups, Geometry, and Dynamics, 5(1):139--167, 2011.

\bibitem{Limbeek} W. van Limbeek, \emph{Structure of normally and finitely non-co-Hopfian groups}, Groups, Geometry, and Dynamics, 15(2):465--489, 2021. 

\bibitem{hurder2021cantor} S. Hurder, O. Lukina, W. van Limbeek, \emph{Cantor dynamics of renormalizable groups}, Groups, Geometry, and Dynamics, 15(4):1449--1487, 2021.

\bibitem{Sapir} M. Sapir, \emph{Some group theory problems}, International Journal of Algebra and Computation, 17:1189--1214, 2007.

\bibitem{Gromov} M. Gromov (with an appendix by J. Tits), \emph{Groups of polynomial growth and expanding maps}, Publications Math{\'e}matiques de l'IH{\'E}S, 53:53--78, 1981.

\bibitem{Rotman} J. J. Rotman, \emph{Advanced Modern Algebra}, American Mathematical Society, volume 114, 2010.
    
\end{thebibliography}
\end{document}